\documentclass[12pt,reqno]{article}

\usepackage{amsmath,amsthm,amssymb,amsfonts}
\usepackage[T1]{fontenc}
\usepackage[utf8]{inputenc}
\usepackage[margin=1in]{geometry}
\usepackage{booktabs}
\usepackage{array}

\usepackage{tikz}
\usepackage{hyperref}

\hypersetup{
  colorlinks=true,
  linkcolor=blue!45!black,
  citecolor=blue!45!black,
  urlcolor=blue!55!black
}

\theoremstyle{plain}
\newtheorem{theorem}{Theorem}
\newtheorem{proposition}[theorem]{Proposition}
\newtheorem{lemma}[theorem]{Lemma}
\newtheorem{corollary}[theorem]{Corollary}

\theoremstyle{definition}

\newtheorem{question}[theorem]{Question}

\theoremstyle{remark}

\begin{document}
\newcommand{\supplementlink}{\href{run:sages_anonymous_slips_supplement.zip}{supplementary materials}}

\title{Sages and Their Anonymous Slips}
\author{Tanya Khovanova and Konstantin Knop}
\date{}
\maketitle

\begin{abstract}
Before the game, the sages agree on a finite set of symbols and on a rule for choosing one of them. They are then seated in an unknown order, either in a row or around a circle with a common clockwise direction. Each sage sees only the adjacent sage or sages and secretly writes one symbol. The slips are then mixed, so nobody learns who wrote which symbol; everyone learns only how many times each symbol occurred. The aim is that every sage should be able to independently reconstruct the entire seating. We ask for the smallest number of distinct symbols that makes this possible. We also consider a version in which the seats have fixed public labels, and each sage is told the label of their own seat.

For large numbers of sages, the minimum number of symbols is proportional to the square of the number of sages, up to a constant factor. Several small cases can be settled exactly. Four sages in a row need exactly two symbols, and so do five sages on a circle. Six sages on a circle need exactly three symbols, both with and without labeled seats; we give a short explicit rule. Five sages in a row with labeled seats also need exactly three symbols. Without seat labels, two symbols are impossible, and four are sufficient; whether three symbols suffice remains open. Whenever a small-case result relies on a computer check, the data and an independent exact verification program are included in the supplementary material.
\end{abstract}

\section{The puzzle}

We start with trivial cases. If three or four sages sit in a circle, they can reconstruct the seating order without using the announcement at all; equivalently, they may all write the same dummy symbol. The same is true for three sages in a row.
 
It is useful to begin with one concrete case. This is the smallest non-trivial number of sages in a circle.

\begin{quote}
\textbf{Puzzle.} Five sages, all of whom know one another, sit around a round table. Everyone agrees which direction is clockwise. Each sage sees only the immediate neighbor on the left and the immediate neighbor on the right. After looking, every sage secretly writes either $0$ or $1$ on a slip of paper.

The slips are collected and mixed. Nobody is told who wrote which slip; only the total number of $1$'s is announced. The sages may agree on a rule before they are seated. Can they choose such a rule so that, after the announcement, every sage can reconstruct the complete circular order?
\end{quote}

The answer is yes. Section~\ref{sec:five-circle} gives a short solution using a pentagon and its diagonals.

The same question behaves differently in a row. On the one hand, the sages at the ends know that they are sitting at the ends. On the other hand, they see only one neighbor rather than two. Which consideration is more useful? To give you the answer, five sages in a row cannot solve the problem with only two symbols. This contrast is a good reason to study both cases.

The problem grew out of a puzzle posed by Nikolai Chernyatiev discussed in~\cite{TKblog}. It is related to the familiar family of sages puzzles and hat-guessing games: each player sees only part of the situation, while the players may coordinate in advance. Winkler's collection~\cite{Winkler} and the paper of Butler, Hajiaghayi, Kleinberg, and Leighton~\cite{BHKhL} provide useful background.

There is one important difference. In a usual hat problem, the players try to infer information that was chosen independently of them. Here the sages create the public information themselves: each writes one symbol. The public information is only the number of occurrences of each symbol. The question is how much of the unknown seating order can still be recovered from these anonymous counts. Chernyatiev's original version retained the author of each slip; in this paper we focus on the anonymous version.

The paper is organized as follows. Section~\ref{sec:notation} introduces the variants and notation. Sections~\ref{sec:counting} and~\ref{sec:general} give general lower and upper bounds. The remaining sections settle the small cases in Table~\ref{tab:small-results}, except for $5R3E$, which remains open.

\section{Rules and notation}
\label{sec:notation}

We use compact names for the variants we study. A code such as $6C3L$ has four parts: the number of sages, the shape of the seating, the number of allowed symbols, and the information available about the seat. Table~\ref{tab:code} lists all notation used in codes.

\begin{table}[ht]
\centering
\begin{tabular}{c|c|p{0.72\textwidth}}
\toprule
position & notation & meaning\\
\midrule
1 & $S$ & the number of sages\\
2 & $R$ & the sages sit in a row\\
2 & $C$ & the sages sit in a circle, and everyone distinguishes clockwise from counterclockwise; in the $E$-version, orders differing only by rotation are identified, while in the $L$-version the fixed seat labels distinguish them\\
3 & $n$ & the number of symbols the sages are allowed to write\\
4 & $E$ & the basic version: the slips are anonymous, and a sage is told nothing about which physical seat they occupy\\
4 & $L$ & the labeled-seat version: every seat has a distinct fixed public label, and each sage is told the label of their own seat; the slips are still anonymous\\
\bottomrule
\end{tabular}
\caption{Notation for the code.}
\label{tab:code}
\end{table}

For example, $5R3E$ means five sages in a row, with three available symbols and no seat labels. The code $6C2L$ means six sages on a circle, two available symbols, and labeled seats.

For each geometry and information model, we study the minimum number of symbols needed for a given number $S$ of sages. The four functions are defined in Table~\ref{tab:notation}.
\begin{table}[ht]
\centering
\begin{tabular}{c|p{0.72\textwidth}}
\toprule
notation & meaning\\
\midrule
$r_E(S),\ r_L(S)$ & the minimum possible values of $n$ for $S$ sages in a row, in the $E$- and $L$-versions\\
$c_E(S),\ c_L(S)$ & the corresponding minima for $S$ sages on a circle\\
\bottomrule
\end{tabular}
\caption{Notation for the minima.}
\label{tab:notation}
\end{table}

Every sage knows all the names and distinguishes left from right. At an end of a row, a sage sees one neighbor and knows which end they occupy; in the interior, a sage sees the ordered pair of neighbors. We define a sage's \emph{local view} to consist of their own name together with this neighbor information. In the $L$-version, the local view also includes the label of the sage's own seat. The agreed rule may use the entire local view, but it contains no random choices. We call this agreed rule a \emph{protocol}.

After the slips are mixed, everyone learns only how many times each symbol occurs. Equivalently, they learn the \emph{multiset} of the $S$ symbols: a bag in which repetitions are remembered but authors are not. A protocol succeeds if every sage can combine this public multiset with their local information and recover the whole seating. Any protocol for an $E$-problem also works for the corresponding $L$-problem, because a sage may simply ignore the seat label.

Table~\ref{tab:small-results} summarizes the small cases proved in this paper. The only unresolved value is $r_E(5)$

\begin{table}[ht]
\centering
\begin{tabular}{c|c|c|l}
\toprule
seating & no seat labels ($E$) & labeled seats ($L$) & proved in\\
\midrule
$4$ sages in a row & $2$ & $2$ & Section~\ref{sec:four-row} \\
$5$ sages in a row & $3$ or $4$ & $3$ & Section~\ref{sec:five-row-more} \\
$5$ sages on a circle & $2$ & $2$ & Section~\ref{sec:five-circle} \\
$6$ sages on a circle & $3$ & $3$ & Sections~\ref{sec:six-circle} and~\ref{sec:six-circle-three}\\
\bottomrule
\end{tabular}
\caption{Minimum numbers of symbols in the small cases discussed below.}
\label{tab:small-results}
\end{table}

\section{A first counting bound}
\label{sec:counting}

Fix a sage and their local view. Call a complete seating \emph{compatible} if it gives that sage this local view.

\begin{lemma}
\label{lem:compatible}
The number of compatible seatings is as follows.
\begin{enumerate}
\item In the $E$ row model it is $(S-2)!$.
\item In the $L$ row model it is $(S-2)!$ for an end sage and $(S-3)!$ for an interior sage.
\item In either circular model it is $(S-3)!$.
\end{enumerate}
\end{lemma}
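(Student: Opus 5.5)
The plan is a direct count. In each model I will identify what the local view fixes, and then count the free arrangements of the sages whose positions it does not determine. First I will make precise what a "seating" is in each model. In the row models a seating is a bijection from the $S$ seats $1,\dots,S$ (left to right) to the names. Left and right are distinguished, so a row and its reversal are different seatings. In the $C$ model with $E$, a seating is a cyclic order with a fixed clockwise direction, so there are $(S-1)!$ of them. In the $C$ model with $L$, a seating is a bijection from the labeled seats to the names, so there are $S!$ of them. Throughout I assume $S\ge 3$, so that an interior sage has two distinct neighbors and the counts make sense.

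For the row with an end sage $X$, the view records which end $X$ occupies and who the single neighbor $Y$ is. This fixes seats $1$ and $2$ (or $S$ and $S-1$). The other $S-2$ sages fill the remaining $S-2$ seats arbitrarily, and every such filling gives $X$ the same view. That gives $(S-2)!$ in both the $E$ and $L$ models, since the label adds nothing new for an end sage.

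For the row with an interior sage $X$, the view is the ordered pair $(A,B)$ of left and right neighbors, so the consecutive block $AXB$ is forced. In the $E$ model I will glue the block into a single object. This leaves $S-2$ objects (the block and the $S-3$ other sages) to be ordered freely in a row, giving $(S-2)!$ seatings. Each of these puts $X$ in an interior seat with the right neighbors, and each such seating arises exactly once. In the $L$ model the label of $X$'s seat also fixes where the block sits. Then only the $S-3$ other sages remain, filling $S-3$ seats, for $(S-3)!$ seatings.

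For the circle, the view of $X$ is the ordered pair (counterclockwise neighbor, clockwise neighbor), so the arc $AXB$ is forced. In the $E$ model I will use rotation to normalize: read the circle clockwise starting from $A$. A seating is then a linear sequence $A,X,B,\ast,\dots,\ast$, in which the remaining $S-3$ names appear in any order, giving $(S-3)!$. In the $L$ model the seat label places $X$, and therefore $A$ and $B$, in fixed seats. The remaining $S-3$ seats are filled freely, again giving $(S-3)!$.

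The only point needing care is the bijection in the interior-$E$ row case. I must check that gluing the triple into a block neither double counts nor misses anything. This holds because the block's position is exactly what $X$ does not know, and because the block's orientation is already fixed by the left/right distinction. So no factor of $2$ appears, and none is lost.
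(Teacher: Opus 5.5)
Your proof is correct and follows the paper's direct count: fix what the local view determines, then permute the remaining sages freely. The only cosmetic difference is in the interior-$E$ row case, where you glue the triple into a block and order $S-2$ objects, while the paper multiplies the $S-2$ possible positions of the triple by $(S-3)!$ arrangements of the others; these give the same count.
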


\begin{proof}
An end sage in a row sees one neighbor, after which the remaining $S-2$ sages can be placed arbitrarily. In the $E$ row model, an interior sage sees an ordered triple consisting of the left neighbor, the sage, and the right neighbor. This triple can occupy any of $S-2$ consecutive positions, and the other $S-3$ sages can be permuted freely, giving $(S-2)(S-3)!=(S-2)!$ possibilities. In the $L$ row model, the seat label fixes the position of the triple.

In a circle, fixing the sage and the two ordered neighbors leaves the other $S-3$ sages to be arranged along the remaining arc in an arbitrary order.
\end{proof}

The simplest lower bound comes from counting how many different public announcements are available. There is one useful observation before we count: a sage already knows the symbol on their own slip. In their mind, they may therefore remove one known copy of that symbol from the public multiset. What remains is a multiset of only $S-1$ symbols. For two-symbol problems, this observation helps with the answer in the first interesting cases.

\begin{proposition}[The counting bound]
\label{prop:counting}
For $S$ sages in a row using $n$ symbols, in either the $E$- or $L$-version, solvability requires
\[
  \binom{n+S-2}{S-1}\ge (S-2)!.
\]
For $S$ sages on a circle using $n$ symbols, in either version, solvability requires
\[
  \binom{n+S-2}{S-1}\ge (S-3)!.
\]
\end{proposition}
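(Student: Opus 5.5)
We prove this by pigeonhole, applied to a single sage chosen so that Lemma~\ref{lem:compatible} gives the largest compatible set. Fix a successful protocol and a sage $X$ with a fixed local view $v$. In the $E$ row model any sage will do. In the $L$ row model we take $X$ to be an end sage, since then Lemma~\ref{lem:compatible} gives $(S-2)!$ compatible seatings. In either circular model any sage gives $(S-3)!$. We need a lower bound of this size in both versions simultaneously. In the $L$ row version the end sage attains $(S-2)!$, and in the $E$ row version every sage does, so the same bound holds in both.

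Next we use the fact that $X$'s own symbol is determined by $v$, because the protocol is deterministic and depends only on the local view. For each compatible seating $\sigma$, the public multiset therefore always contains this fixed symbol, and removing one copy of it leaves a multiset $M(\sigma)$ of $S-1$ symbols from an alphabet of size $n$. The number of such multisets is the stars-and-bars count $\binom{n+S-2}{S-1}$.

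Success means that $X$ recovers the seating from $v$ and the announced multiset, or equivalently from $v$ and $M(\sigma)$. So the map $\sigma\mapsto M(\sigma)$, restricted to the seatings compatible with $v$, must be injective. Otherwise two distinct compatible seatings would produce identical information for $X$, and $X$ could not tell them apart. Injectivity into a set of size $\binom{n+S-2}{S-1}$ gives the stated inequalities, with $(S-2)!$ in the row case and $(S-3)!$ in the circle case.

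Two points need care. The first is justifying that we may choose an end sage in the $L$ row version: successful reconstruction is required of every sage, so in particular of an end sage. The second is the meaning of ``distinct seating'' in the $E$ circular model, where rotations are identified. The count $(S-3)!$ in Lemma~\ref{lem:compatible} already counts seatings up to rotation once $X$'s neighborhood is fixed, so no extra factor arises. Everything else is routine.
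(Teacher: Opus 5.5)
Your proof is correct and matches the paper's argument: fix one sage's local view (an end sage in the row case), remove that sage's known symbol to leave a bag of $S-1$ symbols counted by $\binom{n+S-2}{S-1}$, and require this bag to be injective on the compatible seatings counted in Lemma~\ref{lem:compatible}. Your added remarks, that an end sage must succeed in the $L$ row version and that rotations are already accounted for in the $E$ circle count, are accurate and only make explicit what the paper leaves implicit.
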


\begin{proof}
Fix a sage and fix what that sage sees. Their own output is then known. After removing it, the public announcement is a bag of $S-1$ symbols of $n$ possible types. The number of such bags is
\[
  \binom{n+S-2}{S-1},
\]
the usual count for choosing $S-1$ objects of $n$ types when repetition is allowed. If two compatible seatings produced the same bag, this sage could not tell them apart. Thus there must be at least as many bags as compatible seatings. For a row, looking at an end sage gives the first inequality; for a circle, Lemma~\ref{lem:compatible} gives the second.
\end{proof}

This argument can only rule problems out; it does not construct a strategy. Even so, it gives a useful first map. For $n=2,3,4$, the largest values of $S$ not ruled out by counting are arranged in Table~\ref{tab:counting}.

\begin{table}[ht]
\centering
\begin{tabular}{c|ccc}
\toprule
number of symbols $n$ & $2$ & $3$ & $4$\\
\midrule
largest $S$ not ruled out for a row & $4$ & $5$ & $6$\\
largest $S$ not ruled out for a row a circle & $6$ & $7$ & $8$\\
\bottomrule
\end{tabular}
\caption{The elementary counting boundary for anonymous slips.}
\label{tab:counting}
\end{table}

With two symbols, removing one's own slip leaves only $S$ possible counts: $0,1,\ldots,S-1$ copies of one of the symbols. Therefore, five sages in a row are already impossible by counting. For six sages on a circle, the two sides of the inequality are both $6$:
\[
   \binom{6}{5}= 6=(6-3)!.
\]
So the simple count neither proves nor disproves that case; we return to it in Section~\ref{sec:six-circle}.

\section{A general construction}
\label{sec:general}

There is a simple construction that works for every number of sages. Give a separate symbol to every unordered pair of names. The sages will use these symbols to record a pair of themselves and one of their neighbors. Although the slips are anonymous, the collection of pair-symbols still reveals the whole path or cycle formed by the seating.

\begin{theorem}
\label{thm:pairs}
For every $S\ge3$, both the row and circle problems can be solved in the $E$-version using
\[
  \binom{S}{2}
\]
symbols. The same protocols also solve the corresponding $L$-problems.
\end{theorem}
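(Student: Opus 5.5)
We will use one symbol for each unordered pair of names, and let each sage's slip name one edge of the seating graph, so that the anonymous multiset is exactly the edge set. For the circle, the protocol is: every sage writes the symbol of the pair consisting of themselves and their \emph{clockwise} neighbor. The $S$ adjacent pairs of a cycle with $S\ge3$ are pairwise distinct as unordered pairs. Each such pair is written by exactly one sage, namely its counterclockwise member. So the public multiset is precisely the edge set of the Hamiltonian cycle formed by the seating, each edge appearing once. An unordered edge set determines the cycle only up to reflection. However, the sage who reads it also knows their own clockwise neighbor. This fixes the direction of traversal, and reading the cycle in that direction gives the clockwise circular order. In the $E$-version this is the complete answer, since rotations are identified.

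For the row with $S$ sages there are only $S-1$ adjacencies, so one sage must write something extra. We let the left end sage write a ``dummy'' copy of their own edge. Concretely, the protocol is: every sage except the right end writes the pair consisting of themselves and their right neighbor, and the right end sage writes the pair consisting of themselves and their left neighbor. Both end sages know they are at an end, so this rule is well defined. The multiset then consists of the $S-1$ path edges, with the last edge appearing twice. The underlying set of distinct symbols is the edge set of a Hamiltonian path on the $S$ names, which determines the path up to reversal. The doubled symbol identifies which end of the path is the right end, so the orientation is determined.

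An alternative way to fix the orientation is via the sage's own view. An interior sage knows their left and right neighbors, which orients the path, and an end sage knows which end they occupy. Either way, every sage reconstructs the full row.

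For the $L$-versions, we note, as the paper already does, that a sage may ignore their seat label. Each reconstructed order (circular order up to rotation, or row order) is placed onto the labeled seats using the sage's own label and position. Hence every sage also learns who sits in each labeled seat.

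The only point needing care is that distinct edges really receive distinct symbols, so that no information is lost. For the circle this requires $S\ge3$, since for $S=2$ the two neighbors would coincide. Apart from that, we must check that the reconstruction is stated per sage, using only the multiset and the sage's local view, which is routine.
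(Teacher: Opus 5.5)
Your proof is correct and follows the paper's argument essentially verbatim. Both use one symbol per unordered pair, with each sage on the circle recording the edge to a fixed-side neighbor, and on the row the rightmost sage duplicating the last edge; orientation comes from the sage's own view, and the paper also reads off the right end as the member of the doubled pair that appears in no other pair. One small slip: you first say the \emph{left} end sage writes the dummy copy, but under your concrete rule it is the right end sage who duplicates the edge $\{b_{S-1},b_S\}$.
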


\begin{proof}
Assign a different symbol $c_{\{x,y\}}$ to every unordered pair of sages $\{x,y\}$.

First put the sages in a circle. Every sage writes the symbol belonging to the pair consisting of themselves and their right neighbor. If the circular order is
\[
  b_1,b_2,\ldots,b_S,
\]
then the announced multiset is
\[
 \bigl\{c_{\{b_1,b_2\}},c_{\{b_2,b_3\}},\ldots,
 c_{\{b_S,b_1\}}\bigr\}.
\]
Thus the slips reveal exactly which pairs of sages sit next to each other. Those pairs form a single cycle. There are two ways to travel around that cycle, and a sage's knowledge of which neighbor is on the left chooses the correct direction.

Now put the sages in a row. Every sage except the rightmost one writes the symbol belonging to the pair consisting of themselves and their right neighbor. The rightmost sage writes the symbol belonging to the pair consisting of themselves and their left neighbor. For the row
\[
  b_1,b_2,\ldots,b_S,
\]
the multiset is
\[
 \bigl\{c_{\{b_1,b_2\}},c_{\{b_2,b_3\}},\ldots,
 c_{\{b_{S-1},b_S\}},c_{\{b_{S-1},b_S\}}\bigr\}.
\]
After duplicates are ignored, these symbols tell us exactly which pairs are adjacent in the row, so they form one chain containing all the names. The one symbol that occurs twice marks the last pair in the row. Of the two names in that pair, exactly one occurs in no other pair; that sage is the rightmost one. The chain is therefore determined from left to right.
\end{proof}

We can now estimate the growth of our functions $r_E(S), r_L(S), c_E(S), c_L(S)$.

\begin{corollary}
\label{cor:theta}
As $S\to\infty$, all four minimum numbers of symbols grow quadratically:
\[
  r_E(S),\ r_L(S),\ c_E(S),\ c_L(S)=\Theta(S^2).
\]
Here $\Theta(S^2)$ means that each quantity is bounded above and below by positive constant multiples of $S^2$ for all sufficiently large $S$. More precisely,
\[
 \left(\frac1{e^2}+o(1)\right)S^2
 \le r_L(S)\le r_E(S)\le \binom{S}{2},
\]
and
\[
 \left(\frac1{e^2}+o(1)\right)S^2
 \le c_L(S)\le c_E(S)\le \binom{S}{2}.
\]
\end{corollary}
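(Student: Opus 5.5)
The plan has three parts: two trivial comparisons for the middle and right inequalities, and an asymptotic analysis of the counting bound for the left one.

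\textbf{Upper bound and the middle inequality.} The bound $\binom{S}{2}$ is Theorem~\ref{thm:pairs}. The inequalities $r_L(S)\le r_E(S)$ and $c_L(S)\le c_E(S)$ hold because any $E$-protocol also solves the corresponding $L$-problem; a sage simply ignores the seat label.

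\textbf{Lower bound.} It suffices to bound $c_L(S)$ and $r_L(S)$ from below. By Proposition~\ref{prop:counting}, in either geometry a solvable protocol with $n$ symbols must satisfy
\[
\binom{n+S-2}{S-1}\ge (S-3)!.
\]
For the row the right side is even $(S-2)!$, so the circle inequality is implied and I will analyze only this weaker one. Write $m=S-1$. The key estimate is
\[
\binom{n+m-1}{m}\le \frac{(n+m)^m}{m!}.
\]
Combining it with the Proposition gives
\[
(n+m)^m\ge m!\,(m-2)!.
\]
By Stirling, $m!\,(m-2)!=\bigl(m/e\bigr)^{2m}e^{O(\log m)}$. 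Taking $m$-th roots yields
\[
n+m\ge \frac{m^2}{e^2}\,e^{O(\log m/m)}=\frac{m^2}{e^2}\bigl(1+o(1)\bigr).
\]
Hence $n\ge (e^{-2}+o(1))m^2-m=(e^{-2}+o(1))S^2$, since both the linear term and the shift from $m$ to $S$ are absorbed into $o(1)S^2$.

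\textbf{Conclusion and main obstacle.} Combining the lower and upper bounds gives $\Theta(S^2)$, because $\binom{S}{2}\sim S^2/2$. The only delicate point is keeping the Stirling error terms honest. The subexponential factors $e^{O(\log m)}$ must become $1+o(1)$ after taking the $m$-th root, which is true because $\log m/m\to0$. One must also check that the crude bound $\binom{n+m-1}{m}\le (n+m)^m/m!$ loses nothing at the scale that matters: $n$ is of order $m^2$, so $n+m$ and $n$ differ only by a factor $1+o(1)$.
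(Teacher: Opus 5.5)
Your proof is correct and follows essentially the same route as the paper: the counting bound of Proposition~\ref{prop:counting} combined with Stirling's formula, with the upper bound taken from Theorem~\ref{thm:pairs}. The one difference is minor. The paper uses a two-sided expansion of $\log\binom{n+S-2}{S-1}$ stated under the assumption $n=\Theta(S^2)$, whereas your one-sided estimate $\binom{n+m-1}{m}\le (n+m)^m/m!$ holds for every $n$ and so avoids that assumption; this also makes your closing check on the crude bound unnecessary for the inequality itself.
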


\begin{proof}
The upper bounds come from Theorem~\ref{thm:pairs}. For the lower bounds, apply Proposition~\ref{prop:counting} and Stirling's formula. The shift from $(S-2)!$ to $(S-3)!$ changes only lower-order terms. If $n=\Theta(S^2)$, then
\[
 \log\binom{n+S-2}{S-1}
 = (S-1)\log n-\log((S-1)!)+O\!\left(\frac{S^2}{n}\right).
\]
Combining this estimate with Stirling's formula shows that $n$ must be at least $(1/e^2+o(1))S^2$.
\end{proof}

The two bounds have the same order of growth: the number of required symbols is proportional to $S^2$ up to constant factors. The constant is not known. Counting forces roughly $S^2/e^2$ symbols, while the explicit pair-encoding construction uses roughly $S^2/2$. The comparison is in Table~\ref{tab:asymptotic}.

\begin{table}[ht]
\centering
\begin{tabular}{c|c|c}
\toprule
quantity & asymptotic lower bound & explicit upper bound\\
\midrule
$r_E(S)$, $r_L(S)$, $c_E(S)$, $c_L(S)$  & $(1/e^2+o(1))S^2$ & $\binom{S}{2}$\\
\bottomrule
\end{tabular}
\caption{The leading constant lies somewhere between $1/e^2$ and $1/2$.}
\label{tab:asymptotic}
\end{table}

\section{Four sages in a row}
\label{sec:four-row}

Let the four sages be $A,B,C,D$. Before the game, they agree on the following arrows between their names:
\[
  A\longrightarrow B\longrightarrow C\longrightarrow A,
  \qquad
  D\longrightarrow A,\quad D\longrightarrow B,\quad
  D\longrightarrow C.
\]
Thus $A,B,C$ form a rock--paper--scissors cycle, while every arrow from $D$ points outward. Define $q(x,y)=1$ if the agreed arrow goes from $x$ to $y$, and $q(x,y)=0$ otherwise.

Here is the protocol. The two sages at the ends always write $0$. An interior sage with left neighbor $L$ and right neighbor $R$ writes $q(L,R)$.

The following elementary property of the arrow diagram is exactly what the decoding needs.

\begin{lemma}
\label{lem:tournament}
For every ordering $x,y,u,v$ of the four sages,
\[
  q(x,u)+q(y,v)\ne q(x,v)+q(y,u).
\]
\end{lemma}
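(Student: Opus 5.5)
Rewrite the desired inequality as a statement about a single signed quantity. Set
\[
  \Delta(x,y,u,v)=q(x,u)+q(y,v)-q(x,v)-q(y,u),
\]
and aim to show $\Delta\ne 0$ for every ordering of $A,B,C,D$. The first step is a symmetry reduction. Swapping $x$ and $y$ changes $\Delta$ to $-\Delta$, and so does swapping $u$ and $v$. Hence $\Delta\neq0$ depends only on the unordered split of the four sages into the pairs $\{x,y\}$ and $\{u,v\}$, together with which pair plays the role of the ``first'' pair. There are three ways to split four elements into two pairs, and two ways to order each split, so only six cases need checking.

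The second step uses one structural fact about $q$: for distinct $s,t$, exactly one of $q(s,t)$ and $q(t,s)$ equals $1$, because the arrows form a tournament. In particular $q(t,s)=1-q(s,t)$. Swapping the two pairs then gives
\[
  q(u,x)+q(v,y)-q(v,x)-q(u,y)=-\Delta(x,y,u,v),
\]
using $q(t,s)=1-q(s,t)$ and the fact that the constants cancel. So $\Delta\neq0$ is also invariant under exchanging the pairs, and only the three splits remain:
\[
  \{A,D\}\mid\{B,C\},\qquad \{B,D\}\mid\{C,A\},\qquad \{C,D\}\mid\{A,B\}.
\]

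The third step checks these splits, which by the cyclic symmetry $A\to B\to C\to A$ (this fixes $D$ and preserves the arrows) are all one case. Take $x=D$, $y=A$, $u=B$, $v=C$. Then
\[
  q(D,B)+q(A,C)=1+0=1,\qquad q(D,C)+q(A,B)=1+1=2,
\]
so $\Delta=-1\ne0$.

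The only place needing care is the bookkeeping in the symmetry reductions, specifically confirming that the pair-exchange really negates $\Delta$. If that argument feels slippery, the fallback is simply to enumerate the three splits directly, each with a single ordering. That enumeration is short and fully rigorous on its own.
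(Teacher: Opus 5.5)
Your proof is correct, but it takes a different route from the paper's.

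\textbf{The paper's route.} The paper does a case analysis on where $D$ sits.
\begin{itemize}
\item If $D\in\{x,y\}$, its two terms are both $1$ and cancel. The remaining terms differ because a vertex of the cyclic triangle $A\to B\to C\to A$ beats exactly one of the other two.
\item If $D\in\{u,v\}$, its two terms are both $0$. The remaining terms differ because exactly one of two triangle vertices beats the third.
\end{itemize}
This handles every ordering uniformly. It needs no automorphism and no explicit evaluation.

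\textbf{Your route.} You show that $\Delta$ changes sign under the swap $x\leftrightarrow y$, under the swap $u\leftrightarrow v$, and under exchanging the two pairs. The last step needs the tournament identity $q(t,s)=1-q(s,t)$. You also show that $\Delta$ is invariant under the rotation $A\mapsto B\mapsto C\mapsto A$ fixing $D$. Together these collapse all $24$ orderings to the single value $\Delta(D,A,B,C)=-1$.

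\textbf{What each buys.} Your sign-reversal reductions hold for every tournament on four vertices. So they give a systematic test for any candidate arrow diagram, with at most three checks, one per split into pairs. The paper's argument instead explains why this particular diagram works: it pins down the two features used, namely $D$ being a source and $A,B,C$ forming a cyclic triangle. Your single numerical check does not show this. That matters, because not every tournament satisfies the lemma. The transitive one fails: with $A\to B\to C\to D$, $A\to C$, $A\to D$, $B\to D$, the ordering $x=A$, $y=B$, $u=C$, $v=D$ gives $\Delta=0$.
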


\begin{proof}
Consider the location of $D$. If $D=x$, then $q(x,u)=q(x,v)=1$, while among $A,B,C$ the sage $y$ defeats exactly one of $u,v$. The case $D=y$ is the same. If $D=u$, then $q(x,u)=q(y,u)=0$, while exactly one of $x,y$ defeats $v$ in the directed $3$-cycle. The case $D=v$ is symmetric.
\end{proof}

\begin{proposition}
\label{prop:4R2E}
The puzzle $4R2E$ is solvable.
\end{proposition}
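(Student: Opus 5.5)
We must show that every sage, from their local view and the announced number of $1$'s, can recover the row. Write the row as $x,y,u,v$ from left to right. By the protocol the ends write $0$, sage $y$ writes $q(x,u)$, and sage $u$ writes $q(y,v)$. So the announced total is
\[
T = q(x,u)+q(y,v).
\]

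First consider an end sage, say the left end $x$, who sees $y$. Only two seatings are compatible: $x,y,u,v$ and $x,y,v,u$, where $\{u,v\}$ are the two remaining names. The first gives total $q(x,u)+q(y,v)$ and the second gives $q(x,v)+q(y,u)$. By Lemma~\ref{lem:tournament} these totals differ, so $T$ tells $x$ which seating occurred. The right end $v$ sees $u$ and must choose between $x,y,u,v$ and $y,x,u,v$. These give $q(x,u)+q(y,v)$ and $q(y,u)+q(x,v)$, which are again the two sides of Lemma~\ref{lem:tournament}. Both end cases thus reduce to the lemma applied to the relabelled ordering.

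Next consider an interior sage. In the $E$-version, sage $y$ sees left neighbor $x$ and right neighbor $u$. The only remaining sage $v$ must then be at the right end, so the seating is already determined without using $T$. Sage $u$ is in the same position: from $y$ and $v$ it deduces that $x$ is at the left end. Since an interior sage's view includes which neighbor is on which side, it pins down the whole row. This agrees with Lemma~\ref{lem:compatible}, which gives $(S-2)!=2$ compatible seatings for every sage when $S=4$; for an interior sage both candidates (triple in positions $1$--$3$ or $2$--$4$) must be examined. An interior sage sees both neighbors but does not know whether it is in seat $2$ or seat $3$. So sage $w$ with left neighbor $a$ and right neighbor $b$ has two candidates:
\begin{itemize}
\item[(i)] $a,w,b,z$;
\item[(ii)] $z,a,w,b$, where $z$ is the fourth sage.
\end{itemize}
In (i) the total is $q(a,b)+q(w,z)$. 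In (ii) it is $q(z,w)+q(a,b)$. Their difference is $q(w,z)-q(z,w)=\pm1$, because exactly one arrow joins $w$ and $z$. The totals therefore differ, and $T$ settles the question.

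The main step is the end-sage case, and it is exactly Lemma~\ref{lem:tournament}. The interior case needs only that every pair is joined by exactly one arrow, which is why the arrow diagram must be a tournament. Finally, since every sage recovers the seating in the $E$-version, $4R2E$ is solvable, and $4R2L$ is solvable by ignoring the seat labels.
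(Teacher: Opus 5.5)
Your proof is correct and follows the paper's argument: for end sages you apply Lemma~\ref{lem:tournament} to the two candidate rows ($x\,y\,u\,v$ versus $x\,y\,v\,u$, or $x\,y\,u\,v$ versus $y\,x\,u\,v$ at the right end). For an interior sage you compare $a\,w\,b\,z$ with $z\,a\,w\,b$, where the common term $q(a,b)$ cancels and the tournament property gives a difference of $q(w,z)-q(z,w)=\pm1$.

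One thing must be fixed: delete the paragraph claiming that an interior sage's seating is ``already determined without using $T$.'' In the $E$-version that claim is false; it would hold only with seat labels. It also contradicts the correct two-candidate analysis you give immediately afterward.
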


\begin{proof}
Suppose first that an interior sage $x$ sees $L$ on the left and $R$ on the right, and let $u$ be the unseen sage. The two compatible rows are
\[
  L\,x\,R\,u
  \qquad\text{and}\qquad
  u\,L\,x\,R.
\]
The contribution of $x$ is $q(L,R)$ in both rows. The other interior contribution is $q(x,u)$ in the first row and $q(u,x)$ in the second. Exactly one of these is $1$, so the announced totals differ.

Now suppose the left-end sage $x$ sees $y$, leaving $u,v$ unseen. The two possible rows are
\[
  x\,y\,u\,v
  \qquad\text{and}\qquad
  x\,y\,v\,u.
\]
Their numbers of $1$'s are
\[
  q(x,u)+q(y,v)
  \qquad\text{and}\qquad
  q(x,v)+q(y,u),
\]
which are different by Lemma~\ref{lem:tournament}. The right-end case is symmetric. Hence every sage can select the unique compatible row with the announced total.
\end{proof}

\section{Five sages in a circle}
\label{sec:five-circle}

The five-sage circle has an especially pleasant solution: no table, no search, just a pentagon.

In advance, place the five sages' names at the vertices of a regular pentagon in an agreed order. Each pair of names is now joined either by a side or by a diagonal. We call this pentagon the \textit{name pentagon}.

Here is the protocol: a sage writes $1$ if the segment joining their own name to their left neighbor is a diagonal, and writes $0$ if it is a side, as illustrated in Figure~\ref{fig:name-pentagon}. Thus, the announced number of $1$'s is simply the number of seating adjacencies that are diagonals of the name pentagon.

\begin{figure}[ht]
\centering
\begin{tikzpicture}[scale=1.15,line cap=round,line join=round]
  \foreach \i/\name in {0/A,1/B,2/C,3/D,4/E}{
    \coordinate (P\i) at ({90-72*\i}:1.35);
    \fill (P\i) circle (1.5pt);
    \node at ({90-72*\i}:1.62) {$\name$};
  }
  \draw[gray!70,line width=.7pt]
    (P0)--(P1)--(P2)--(P3)--(P4)--cycle;
  \draw[line width=.9pt]
    (P0)--(P2)--(P4)--(P1)--(P3)--cycle;
\end{tikzpicture}
\caption{The five diagonals form another $5$-cycle. A sage writes $1$ when the seating edge to the left neighbor belongs to this diagonal cycle.}
\label{fig:name-pentagon}
\end{figure}

\begin{proposition}
\label{prop:5C2E}
The puzzle $5C2E$ is solvable.
\end{proposition}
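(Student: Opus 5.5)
Each sage sees a directed triple $(L,x,R)$, and by Lemma~\ref{lem:compatible} exactly $(5-3)!=2$ circular seatings are compatible with it: the two unseen sages $u,v$ fill the remaining arc either as $R,u,v,L$ or as $R,v,u,L$. So it suffices to show that, for every triple, these two seatings have different numbers of $1$'s. The announced number of $1$'s equals the number of seating adjacencies that are diagonals of the name pentagon. This is a property of the undirected $5$-cycle of adjacencies, because each adjacency is counted exactly once, namely by the sage on its right end.

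The two seatings share the edges $Lx$ and $xR$. The first has the further edges $Ru$, $uv$, $vL$, and the second has $Rv$, $vu$, $uL$. The edge $uv$ is common to both, so the totals differ if and only if
\[
 d(R,u)+d(v,L)\ne d(R,v)+d(u,L),
\]
where $d(a,b)=1$ when $ab$ is a diagonal and $0$ when it is a side. This plays the role that Lemma~\ref{lem:tournament} played for four sages in a row.

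To verify this inequality, note that in the name pentagon each vertex has exactly two side-neighbours and two diagonal-neighbours. Here $L,R,u,v$ are the four vertices other than $x$. Use the dihedral symmetry to place $x$ at $A$. Then $\{L,R,u,v\}=\{B,C,D,E\}$, and the sides among these four vertices are $BC$, $CD$ and $DE$, while the diagonals are $BD$, $BE$ and $CE$. The pairs $\{L,R\}$ and $\{u,v\}$ form a partition of $\{B,C,D,E\}$ into two pairs. The quantity compared is which of the two remaining perfect matchings, $\{Ru,vL\}$ or $\{Rv,uL\}$, has more diagonals.

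The three perfect matchings of $\{B,C,D,E\}$ are:
\begin{itemize}
\item $\{BC,DE\}$, with $0$ diagonals;
\item $\{BD,CE\}$, with $2$ diagonals;
\item $\{BE,CD\}$, with $1$ diagonal.
\end{itemize}
These counts are pairwise distinct. So for any choice of the matching $\{Lx\text{-partners}\}=\{LR,uv\}$, the other two matchings have different diagonal counts, and the inequality holds. The only real work is this small check, and the $0,1,2$ distinctness settles it without any case split on the direction of the triple.

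Consequently each sage can pick the unique compatible seating whose diagonal count matches the announcement, which proves that $5C2E$ is solvable.
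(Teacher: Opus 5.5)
Your proof is correct and follows essentially the same route as the paper: both cancel the common edges $xL$, $xR$, $uv$, reduce to comparing the pairings $\{Ru,vL\}$ and $\{Rv,uL\}$, and observe that the three perfect matchings of the four names other than $x$ contain $0$, $1$, $2$ diagonals. The only cosmetic difference is that the paper phrases this as the diagonals forming a path through those four names once $x$ is removed, while you check it by explicit listing after placing $x$ at $A$.
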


\begin{proof}
Fix a sage $x$, with left neighbor $L$ and right neighbor $R$. Let $u$ and $v$ be the two unseen sages. Up to rotation, the two possible oriented circular orders are
\[
  x,R,u,v,L
  \qquad\text{and}\qquad
  x,R,v,u,L.
\]
These two seating cycles have the three common edges $xR$, $xL$, and $uv$. Their remaining edges are, respectively,
\[
  \{Ru,vL\}
  \qquad\text{and}\qquad
  \{Rv,uL\}.
\]
Each set simply pairs the four names $L,R,u,v$ into two disjoint pairs, and the two pairings are different.

The five diagonals of the name pentagon form another $5$-cycle. After removing $x$, those diagonals form a path through the other four names. There are exactly three ways to split four names into two pairs, and these three pairings contain, respectively, $0$, $1$, and $2$ edges of that path. Hence two different pairings use different numbers of pentagon diagonals. The two candidate seatings therefore give different numbers of $1$'s, so $x$ knows which seating is the real one.
\end{proof}

\section{Why two symbols fail for six sages on a circle}
\label{sec:six-circle}

For six sages on a circle, the counting bound is exactly tight: after removing their own slip, a sage sees one of six possible counts, and six complete circular orders are compatible with the same local information. Thus counting alone cannot decide the problem.

We prove something stronger than needed. Give every seat a fixed label and tell each sage the label of their own seat. Even with this extra information, two symbols are not enough.

\begin{theorem}
\label{thm:6C2L}
The puzzle $6C2L$ has no deterministic solution.
\end{theorem}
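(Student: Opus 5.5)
The plan is to turn the counting bound of Proposition~\ref{prop:counting}, which is tight here, into a rigid combinatorial structure, and then show that this structure cannot exist. Label the seats $1,\dots,6$ cyclically. A local view at seat $i$ is the ordered triple of names in seats $i-1,i,i+1$. For each seat $i$ there are $6\cdot5\cdot4=120$ such views. By Lemma~\ref{lem:compatible}, each view has exactly $3!=6$ compatible seatings, obtained by permuting the three unseen sages in seats $i+2,i+3,i+4$. Suppose a deterministic protocol exists, and let $f(\sigma)$ be the total number of $1$'s in seating $\sigma$. After the sage removes their own slip, only the six counts $0,\dots,5$ remain. So if the sage's own bit is $b$, the six compatible seatings must give the other five slips exactly the totals $0,1,\dots,5$, each once. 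Equivalently, $f$ takes each of the values $b,b+1,\dots,b+5$ exactly once on the class.

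The first step extracts global constraints. Let $N_i$ be the number of views at seat $i$ whose owner writes $1$. Summing $f$ over all $720$ labeled seatings class by class at seat $i$ gives $\sum_\sigma f(\sigma)=6N_i+15\cdot120$. Summing the same quantity seat by seat gives $\sum_\sigma f(\sigma)=6\sum_j N_j$. Hence all $N_i$ are equal to a common value $N$, and $30N=6N+1800$ gives $N=75$. More usefully, compare the value distributions. The number of seatings with $f=0$ equals the number of $0$-views at any fixed seat, namely $45$. Likewise the number of seatings with $f=6$ is $75$. Let $Z$ be the set of seatings with $f=0$ and $O$ the set with $f=6$. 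The key reformulation follows: every $0$-view at seat $i$ occurs in exactly one seating of $Z$, and every $1$-view occurs in exactly one seating of $O$. Therefore, for every seat $i$, the windows at seat $i$ of seatings in $Z$ and in $O$ are pairwise distinct and together partition all $120$ ordered triples. In coding terms, no two seatings in $Z\cup O$ agree on three cyclically consecutive seats, and $Z\cup O$ consists of $120$ permutations covering every window at every seat exactly once. The intermediate values $f=1,\dots,5$ give similar but weaker "exactly once per class" conditions, and I would keep these in reserve.

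The second step, which I expect to be the main obstacle, is to show that no such family exists with the extra parity structure. The family splits as $|Z|=45$, $|O|=75$, and at each seat $Z$ covers exactly the $0$-views while $O$ covers the $1$-views. I would first try double counting on overlapping windows. Fix an ordered pair $(a,b)$ in seats $(1,2)$. The $Z$-seatings with this pair contribute simultaneously to the seat-$1$ windows $(c,a,b)$ and to the seat-$2$ windows $(a,b,d)$. Propagating these overlap counts around the cycle, and combining them with the condition that within each class the values of $f$ are consecutive integers, should force divisibility or parity conditions that are incompatible with $45$ and $75$. For example, one can consider how many of the $6$ completions of a $0$-view have $f$ odd: this is exactly $3$, and summing that over views against the seat-by-seat parity of $f$ may give a contradiction. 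If the hand argument does not close, the fallback is an exhaustive but heavily pruned search. One exploits the symmetry of relabeling names and rotating seats, decides the bit of each view seat by seat, and prunes as soon as some class of six seatings has two equal values of $f$ or a gap. The result would be certified by the exact verification program mentioned in the abstract.

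Finally, I would remark that the $L$-version is the stronger statement. Any $6C2E$ protocol is a $6C2L$ protocol, so this impossibility also shows that $c_E(6)\ge c_L(6)\ge3$.
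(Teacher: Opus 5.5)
Your reduction is correct as far as it goes, but the proof is never finished. The second step is only a list of strategies you would try: overlap counting around the cycle, a parity count, and, failing those, a pruned computer search. None of them is carried out, so no contradiction is actually derived.

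There is also an arithmetic slip in the first step. There are six seats, so $6\sum_j N_j=36N$, and $36N=6N+1800$ gives $N=60$, not $75$. Hence $|Z|=|O|=60$ rather than $45$ and $75$. With the corrected value the averaging identity is perfectly consistent, and your parity count likewise only recovers global information. The window-covering family $Z\cup O$ would send you into a hard structural question that you do not need.

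The contradiction comes from the intermediate totals you set aside as weaker. For each $t\in\{1,\dots,5\}$, $t$ lies in both $\{0,\dots,5\}$ and $\{1,\dots,6\}$. Hence every one of the $720$ views (all seats together) is compatible with exactly one seating having $f=t$, whatever the observer's bit.

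Double count pairs (view, compatible seating with $f=t$), splitting by the observer's bit. A seating with $f=t$ contains exactly $t$ views with bit $1$ and $6-t$ with bit $0$. Write $M_t$ for the number of seatings with $f=t$ and $V_b$ for the number of views with bit $b$. Then $V_1=tM_t$ and $V_0=(6-t)M_t$, so $M_t=120$ and $V_1=120t$.

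The cases $t=1$ and $t=2$ give $V_1=120$ and $V_1=240$, which is impossible. Your corrected averaging gives yet another value, $V_1=6N=360$. This double count at $t=1$ and $t=2$ is exactly the paper's argument, and it closes the proof in a few lines with no analysis of $Z\cup O$ and no computation.
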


The proof is a double-counting argument and does not require computation.

\begin{proof}
Number the six seats clockwise and regard all $6!=720$ assignments of the six distinct sages to these labeled seats as distinct. A \emph{local view} specifies a sage, their seat, and the sages immediately to their left and right. There are $6\cdot6\cdot5\cdot4=720$ such views, and exactly $3!=6$ full seatings are compatible with each view.

Suppose a successful binary protocol exists, with symbols $0$ and $1$. Write $T(\pi)$ for the total number of $1$'s in a full seating $\pi$. For a local view $v$, let $c(v)\in\{0,1\}$ be the bit written by its observer. Because the six compatible seatings must give distinct public totals, and the observer's bit is fixed, their totals are exactly
\[
  c(v),\ c(v)+1,\ \ldots,\ c(v)+5.
\]
Let $V_0$ and $V_1$ be the numbers of local views with $c(v)=0$ and $c(v)=1$, respectively. Let $N_t$ be the number of complete labeled seatings with $T(\pi)=t$.

Count pairs $(v,\pi)$ in which $\pi$ is compatible with $v$ and $T(\pi)=1$. Every local view, regardless of its bit, occurs in exactly one such pair. On the other hand, a seating with total $1$ has five observers writing $0$ and one writing $1$. Counting separately by the observer's bit yields
\[
  V_0=5N_1,\qquad V_1=N_1.
\]
Now perform the analogous count for pairs with $T(\pi)=2$. Again, every local view occurs exactly once, since $2$ belongs to both possible sets of six totals. Each seating with total $2$ has four observers writing $0$ and two writing $1$. Thus
\[
  V_0=4N_2,\qquad V_1=2N_2.
\]
Since $V_0+V_1=720>0$, the first pair of identities implies $V_0=5V_1$ with $V_1>0$, while the second implies $V_0=2V_1$. This is impossible. Therefore, no successful protocol exists.
\end{proof}

We can now summarize all problems with two available symbols. The four-sage row and the five-sage circle are the last solvable cases; counting rules out all larger cases except the six-sage circle, which was settled by Theorem~\ref{thm:6C2L}.

\begin{theorem}[Two-symbol classification]
\label{thm:binary}
When only two symbols are available, the complete picture is
\begin{center}
\begin{tabular}{c|c|c}
\toprule
geometry & $E$-version & $L$-version\\
\midrule
row & solvable exactly for $S\le4$ & solvable exactly for $S\le4$\\
circle & solvable exactly for $S\le5$ & solvable exactly for $S\le5$\\
\bottomrule
\end{tabular}
\end{center}
\end{theorem}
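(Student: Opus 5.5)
The classification is assembled from results already proved, plus a check of the trivial small cases and of monotonicity in $S$. We first settle solvability. For rows, $S\le 3$ is trivial: with $S=2$ or $3$, an end sage sees its neighbor, and the seating is forced, as noted at the start of Section~1. The case $S=4$ is Proposition~\ref{prop:4R2E}, in the $E$-version and hence in the $L$-version, since any $E$-protocol works for $L$ by ignoring the label. For circles, $S=3,4$ are trivial, because the local view already determines the cyclic order. The case $S=5$ is Proposition~\ref{prop:5C2E}, which again transfers to the $L$-version.

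Next we establish unsolvability above the thresholds, proving it for the $L$-version, which is the stronger statement since an $E$-protocol would yield an $L$-protocol. With $n=2$, Proposition~\ref{prop:counting} requires $S\ge (S-2)!$ for a row and $S\ge (S-3)!$ for a circle, because $\binom{S}{S-1}=S$.

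For rows, the requirement fails at $S=5$, since $6>5$. For all larger $S$ it fails as well: $(S-2)!$ grows faster than $S$, and a one-line induction shows $(S-2)!>S$ for $S\ge5$.

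For circles, $(S-3)!>S$ holds for $S\ge7$. At $S=7$ we have $24>7$, and the induction step is immediate. The single case not excluded by counting is $S=6$, where equality holds; it is handled by Theorem~\ref{thm:6C2L}, which covers $L$ and therefore $E$.

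The only point needing care is that Proposition~\ref{prop:counting} is stated for both versions. In the $L$ row model we must use an end sage, whose number of compatible seatings is $(S-2)!$ by Lemma~\ref{lem:compatible}, so the inequality indeed applies there. Likewise, the circle count $(S-3)!$ holds in both models. With these checks, the table follows, and nothing beyond bookkeeping remains. The main obstacle, the tight case $6C2L$, has already been resolved by the double-counting argument.
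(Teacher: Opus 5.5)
Your proposal is correct and follows the paper's proof. The trivial small cases and Propositions~\ref{prop:4R2E} and~\ref{prop:5C2E} (transferred to the $L$-version by ignoring labels) give solvability. The identity $\binom{S}{S-1}=S$ in Proposition~\ref{prop:counting} rules out rows with $S\ge5$ and circles with $S\ge7$, and Theorem~\ref{thm:6C2L} settles the tight case $S=6$ on a circle even with labeled seats; your explicit checks of $(S-2)!>S$ and $(S-3)!>S$ only spell out what the paper asserts directly.
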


\begin{proof}
Rows with at most three sages and circles with at most four sages need no public information. Propositions~\ref{prop:4R2E} and~\ref{prop:5C2E} give the two nontrivial positive boundary cases, and the same protocols work with labeled seats.

For two symbols,
\[
  \binom{2+S-2}{S-1}=S.
\]
The row inequality in Proposition~\ref{prop:counting} fails for every $S\ge5$. The circular inequality fails for every $S\ge7$. The remaining circular case $S=6$ is impossible even with labeled seats by Theorem~\ref{thm:6C2L}.
\end{proof}

\section{Six sages in a circle with three symbols}
\label{sec:six-circle-three}

Two symbols do not suffice for six sages on a circle. With three symbols, however, there is a short explicit rule. The rule below may look slightly artificial at first, but it uses only the sage's name and the two visible neighbors; no table of $120$ cases is needed.

\begin{proposition}
\label{prop:6C3E}
Both $6C3E$ and $6C3L$ are solvable. Consequently,
\[
  c_E(6)=c_L(6)=3.
\]
\end{proposition}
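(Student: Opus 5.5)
The lower bound is immediate from earlier results. By Theorem~\ref{thm:6C2L}, two symbols fail even in the labeled-seat version, so $c_L(6)\ge 3$. Any $E$-protocol is also an $L$-protocol, so $c_E(6)\ge c_L(6)$. It therefore suffices to give one explicit three-symbol protocol for $6C3E$; the same protocol then solves $6C3L$ and gives $c_E(6)=c_L(6)=3$.

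For the construction I would first try to generalize the pentagon idea of Proposition~\ref{prop:5C2E}. Fix in advance a colouring $\chi$ of the $15$ edges of $K_6$ on the names with colours $0,1,2$. Each sage writes $\chi(\{x,L\})$, where $L$ is their left neighbor. The announced multiset is then the colour-count vector of the Hamiltonian cycle formed by the seating.

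Fix a sage $x$ who sees $L$ and $R$, and let the unseen sages be $u,v,w$. The six compatible seatings share the edges $xL$ and $xR$. They differ only in the Hamiltonian path $R\to\cdot\to\cdot\to\cdot\to L$ through $u,v,w$, which has four edges. The requirement is therefore: for every ordered pair $(R,L)$ of distinct names and every $x$ outside it, the six paths from $R$ to $L$ through the remaining three names have pairwise distinct colour-count vectors. Since there are $15$ vectors of four edges in three colours, this is not excluded by counting.

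To keep verification short, I would choose $\chi$ with a large symmetry group, so that only a few orbits of $(x,L,R)$ need checking. One option is a colouring invariant under the cyclic group $\mathbb{Z}_5$ acting on five names, with the sixth name fixed. For example, use names $\infty,0,\dots,4$. Colour the edges $\{\infty,i\}$ with one colour. Colour the edges $\{i,i\pm1\}$ (pentagon sides) and $\{i,i\pm2\}$ (diagonals) with the other colours, or mix the colours across these classes. Up to symmetry, $(x,L,R)$ then falls into a handful of cases, according to whether $\infty\in\{x,L,R\}$ and the pentagon distances among the rest. In each case one lists six paths of four edges, which is a routine check.

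The main obstacle is finding a colouring that actually separates all six paths in every case. An edge colouring depending only on unordered pairs may turn out to be too rigid. One sign of this: reversing a path $R\,a\,b\,c\,L$ to $R\,c\,b\,a\,L$ changes only the end edges, which requires colour differences between $Ra,cL$ and $Rc,aL$. If no symmetric edge colouring works, I would fall back on a rule in which the symbol depends on the ordered triple $(L,x,R)$. An example is a tournament-style rule in the spirit of Section~\ref{sec:four-row}, combined with the pentagon side/diagonal distinction. Each sage's symbol then encodes an orientation-sensitive quantity, and this breaks the reversal symmetry. In either form, the final proof consists of stating the rule and checking, case by case under the symmetry, that the six candidate seatings give distinct multisets.
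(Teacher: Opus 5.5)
\paragraph{Verdict.} Your lower bound is fine, but the existence half of the proposition is not proved: no three-symbol rule is actually given or verified.

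Theorem~\ref{thm:6C2L} gives $c_L(6)\ge 3$, and $c_E(6)\ge c_L(6)$ because an $E$-protocol is also an $L$-protocol. The upper bound, however, is the real content of the proposition, and your proposal does not supply it. You describe a search plan: try a symmetric edge colouring of $K_6$, and if that fails, try some orientation-sensitive rule. You yourself name ``finding a colouring that actually separates all six paths'' as the main obstacle and leave it unresolved. A statement of this kind is proved only by exhibiting a specific rule and checking that, for each of the $120$ local views, the $3!=6$ completions give six different multisets. Neither the rule nor the check is present.

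\paragraph{Your concrete candidate fails.} Take names $\infty,0,\dots,4$. Every colouring that is constant on spokes $\{\infty,i\}$, sides $\{i,i\pm1\}$ and diagonals $\{i,i\pm2\}$ fails, and this includes every $\mathbb{Z}_5$-invariant colouring. Sage $3$ sees $4$ on one side and $\infty$ on the other. Consider the circular orders $3,\infty,0,2,1,4$ and $3,\infty,2,0,1,4$. Both are compatible with sage $3$'s view, yet each consists of two spokes, two sides and two diagonals. So they give the same multiset whatever colours the three classes receive.

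This is an instance of a necessary condition that follows from your own transposition idea. Swapping the first two unseen sages shows that, in any edge-colouring protocol, the three perfect matchings of every $4$-set of names must receive pairwise distinct colour multisets. Here the $4$-set $\{\infty,0,1,2\}$ has two matchings of type spoke${}+{}$side.

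\paragraph{What the paper does.} It uses a specific rule depending on the ordered triple $(L,x,R)$, which is the direction of your fallback. Names are taken modulo $6$, with $a\equiv L-x$ and $b\equiv R-x$.
\begin{itemize}
\item The sage writes $3$ exactly when $b=3$.
\item Otherwise the sage compares $2a+3b$ with $15$ and uses the parity of $x$ to choose between $1$ and $2$.
\end{itemize}
The relabelings $y\mapsto y+1$ and $y\mapsto -y$ only interchange symbols $1$ and $2$. Hence the $120$ local views fall into $10$ classes, and these are verified exhaustively; the paper's check is by computer, in the supplementary material. Until you fix such a rule and carry out that verification, your argument establishes only $c_E(6)\ge c_L(6)\ge 3$.
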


\begin{proof}
Number the sages $0,1,\ldots,5$, with arithmetic modulo $6$. Suppose sage $x$ sees $L$ on the left and $R$ on the right. Choose $a,b\in\{1,2,3,4,5\}$ so that
\[
  a\equiv L-x\pmod 6,
  \qquad
  b\equiv R-x\pmod 6.
\]
Thus $a$ and $b$ record where the two neighbors lie relative to $x$ in the agreed cyclic numbering of the names.

The rule is as follows.
\begin{itemize}
\item If $b=3$, write $3$.
\item Otherwise compare $2a+3b$ with $15$. For an even $x$, write $1$ when $2a+3b<15$ and $2$ when $2a+3b>15$.
\item For an odd $x$, interchange the roles of $1$ and $2$.
\end{itemize}
For an allowed pair $a\ne b$ with $b\ne3$, equality $2a+3b=15$ never occurs, so the rule is unambiguous.

The rule has two simple symmetries. If every name is increased by $1$ modulo $6$, the relative numbers $a,b$ stay the same, and the roles of symbols $1$ and $2$ are exchanged. If every name is replaced by its negative modulo $6$, the inequality is reversed, and again $1$ and $2$ are exchanged, while symbol $3$ stays fixed. Rotations and reflections of the six names are generated by these two operations, so the $120$ possible local views fall into only $10$ symmetry classes.

For any fixed local view, the three unseen sages can occur in $3!=6$ orders. To prove that the rule works, one only has to check that these six orders always give six different public multisets. The exact check is included in the \supplementlink; it succeeds for all local views. Thus every sage can identify the unique compatible circular order. The same rule solves $6C3L$, because the seat label can simply be ignored. Together with Theorem~\ref{thm:6C2L}, this proves $c_E(6)=c_L(6)=3$.
\end{proof}

\section{Five sages in a row: three symbols or four?}
\label{sec:five-row-more}

The only unresolved small case in Table~\ref{tab:small-results} is five sages in a row without seat labels. We know
\[
  3\le r_E(5)\le4,
\]
and we do not know whether the answer is $3$ or $4$.

\begin{proposition}
\label{prop:computer-protocols}
The following two row problems are solvable.
\begin{enumerate}
\item $5R4E$;
\item $5R3L$.
\end{enumerate}
\end{proposition}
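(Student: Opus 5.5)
The plan is to reduce both claims to explicit protocols and then to a finite exact verification, in the same style as Proposition~\ref{prop:6C3E}. A protocol is a function from local views to symbols. The problem is solvable if, for every local view $v$, the compatible seatings from Lemma~\ref{lem:compatible} produce pairwise distinct public multisets. There are at most $(S-2)!=6$ such seatings in the $E$ row model and at most $6$ or $2$ in the $L$ row model. So, once a protocol is written down, correctness is a short finite check over the $120$ rows. For each of the finitely many local views, we list the compatible rows and confirm that the multisets are distinct.

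For $5R4E$, I would first try to shrink the search space using structure. The two end sages see one neighbor each, so they have $20$ possible views each. An interior sage sees an ordered pair of neighbors, giving $60$ views. It is natural to use the end symbols as in Section~\ref{sec:four-row}: the end sages write a fixed symbol, or a symbol depending only on the unordered pair, so that they carry little information. The interior sages then carry most of the information, with values in $\{0,1,2,3\}$.

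The counting bound is far from tight here: there are $\binom{7}{4}=35$ bags against $6$ seatings. This suggests a randomized or greedy search will succeed quickly. I would also impose a symmetry, for example invariance under the cyclic relabeling $x\mapsto x+1 \pmod 5$ combined with a permutation of the symbols, analogous to the mod-$6$ rule in Proposition~\ref{prop:6C3E}. This cuts the number of free choices by a factor of $5$, after which backtracking search is feasible. The result is then recorded as an explicit table, and the exact verification program in the supplement certifies it.

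For $5R3L$, the seat label helps in two ways. First, the sage in seat $3$ has only $(S-3)!=2$ compatible rows, and so do the sages in seats $2$ and $4$. Only the end sages in seats $1$ and $5$ face $6$ candidates. Second, the protocol may depend on the seat. I would let the middle sage write something nearly fixed. The sages in seats $2$ and $4$ would encode, with three symbols, information about the pair of their two neighbors, so that an end sage, who knows its neighbor, can separate the $6$ orders of the remaining three sages. Here the bound is tight in spirit: $\binom{6}{4}=15$ bags against $6$ orders, with the symbol counts constrained. I would again search, imposing a symmetry under the reflection exchanging seats $1\leftrightarrow5$ and $2\leftrightarrow4$, and then verify exactly.

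The main obstacle is the $5R3L$ end-sage condition. Each end sage must distinguish $6$ rows using only the anonymous bag produced by the other four sages. Satisfying this simultaneously for all $40$ end views, while the interior sages must also distinguish their own $2$ candidates, is a constrained combinatorial design. A hand construction may not exist in a clean form. So I expect to rely on a computer search (SAT or backtracking) and to present the found protocol together with the independent exact check, as the paper does for its other computer-assisted cases.
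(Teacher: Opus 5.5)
\textbf{There is a genuine gap: the certificate is missing.} Your framework matches the paper's: an explicit protocol, plus an exhaustive check that for every local view the compatible rows give distinct multisets. But the proposition is an existence claim. The paper's proof consists exactly of the lookup tables in Appendices~\ref{app:5R4E} and~\ref{app:5R3L} together with the exact verification. Your proposal only predicts that a search will find such tables.

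Your reason for that prediction is slack in the counting bound ($\binom{7}{4}=35>6$ and $\binom{6}{4}=15>6$). That does not establish existence, because Proposition~\ref{prop:counting} is only a necessary condition. Note that $5R3E$ has exactly the same slack, $15\ge 6$, as $5R3L$, yet its solvability is open (Question~\ref{q:5R3E}). Until a concrete protocol is written down and checked, neither item is proved.

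\textbf{Your proposed symmetry for $5R4E$ would make the search return nothing.} Suppose $f(\tau v)=\sigma(f(v))$ for every local view $v$, where $\tau\colon x\mapsto x+1 \pmod 5$ acts on names and $\sigma$ permutes the four symbols.
\begin{itemize}
\item Since $\tau^5$ is the identity, $\sigma^5$ fixes every symbol that is used. Every cycle of $\sigma$ has length at most $4$, so $\sigma$ itself fixes every used symbol, and the protocol is genuinely shift-invariant.
\item Then each position writes the same symbol in $\pi$ and in $\tau\pi$, so these two rows produce the same multiset.
\item The rows $0\,1\,2\,3\,4$ and $1\,2\,3\,4\,0=\tau(0\,1\,2\,3\,4)$ give sage $2$ the same local view: interior, left neighbor $1$, right neighbor $3$. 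In the $E$-version sage $2$ cannot tell seat $3$ from seat $2$.
\end{itemize}
Hence no protocol with this symmetry succeeds.

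The analogy with Proposition~\ref{prop:6C3E} does not transfer. There the shift has order $6$ and acts on symbols by the involution exchanging $1$ and $2$, so a seating and its shift need not receive the same multiset. With five names, the symbol permutation is forced to be trivial.

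If you do run a search, drop this constraint. The seat reflection you suggest for $5R3L$ at least avoids this particular obstruction, because in the labeled model a row and its reversal are never compatible with a common local view.
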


\begin{proof}
Both strategies are specified by explicit lookup tables in Appendices~\ref{app:5R4E} and~\ref{app:5R3L}. For each of the $5!=120$ possible rows and each sage, verification requires that no two rows compatible with that sage's local observation produce the same public multiset. The $5R3L$ protocol may additionally depend on the sage's publicly labeled seat.

The exact exhaustive checker included in the \supplementlink verifies this condition for every sage and every local view in both protocols. Hence both protocols succeed.
\end{proof}

Together with Theorem~\ref{thm:binary}, the second protocol shows that
\[
  r_L(5)=3.
\]
What we do not know is whether those seat labels are really needed.

\begin{question}
\label{q:5R3E}
Can five sages in a row solve the anonymous-slip puzzle with three symbols without seat labels? In other words, is $5R3E$ solvable?
\end{question}

\section{Further questions}

Even these tiny examples leave several natural problems.

\begin{question}
Can the gap in Corollary~\ref{cor:theta} be narrowed? The present lower bound has leading constant $1/e^2$, while the explicit construction has leading constant $1/2$.
\end{question}

\begin{question}
For a fixed number of symbols, does solvability for $S$ sages imply solvability for fewer sages? The anonymous multiset makes the usual ``remove one player'' reductions unexpectedly awkward.
\end{question}

\section{AI Usage}

We used ChatGPT Pro for exploratory computation, verification code, and editorial assistance. The authors checked the resulting mathematical arguments and computational conclusions.

\appendix

\section{A four-symbol protocol for \texorpdfstring{$5R4E$}{5R4E}}
\label{app:5R4E}

Number the sages $0,1,2,3,4$. If sage $x$ sees left neighbor $L$ and
right neighbor $R$, they write $f_x(L,R)$. The symbol ``$-$'' means that
the corresponding side of the row is empty, and a dot denotes an
impossible local view. The five tables are as follows.

{\small
\setlength{\arraycolsep}{2.2pt}
\[
\begin{array}{c@{\hspace{4em}}c@{\hspace{4em}}c}
\begin{array}{c}
x=0\\[-1pt]
\begin{array}{c|cccccc}
L\backslash R & -&0&1&2&3&4\\\hline
- & \cdot & \cdot & 1 & 1 & 2 & 2\\
0 & \cdot & \cdot & \cdot & \cdot & \cdot & \cdot\\
1 & 4 & \cdot & \cdot & 1 & 4 & 2\\
2 & 3 & \cdot & 3 & \cdot & 4 & 3\\
3 & 1 & \cdot & 1 & 4 & \cdot & 3\\
4 & 2 & \cdot & 3 & 3 & 4 & \cdot
\end{array}
\end{array} &
\begin{array}{c}
x=1\\[-1pt]
\begin{array}{c|cccccc}
L\backslash R & -&0&1&2&3&4\\\hline
- & \cdot & 3 & \cdot & 2 & 1 & 2\\
0 & 1 & \cdot & \cdot & 4 & 3 & 3\\
1 & \cdot & \cdot & \cdot & \cdot & \cdot & \cdot\\
2 & 1 & 4 & \cdot & \cdot & 2 & 2\\
3 & 2 & 3 & \cdot & 2 & \cdot & 2\\
4 & 3 & 1 & \cdot & 3 & 4 & \cdot
\end{array}
\end{array} &
\begin{array}{c}
x=2\\[-1pt]
\begin{array}{c|cccccc}
L\backslash R & -&0&1&2&3&4\\\hline
- & \cdot & 3 & 2 & \cdot & 4 & 1\\
0 & 3 & \cdot & 4 & \cdot & 1 & 3\\
1 & 2 & 1 & \cdot & \cdot & 4 & 4\\
2 & \cdot & \cdot & \cdot & \cdot & \cdot & \cdot\\
3 & 3 & 4 & 1 & \cdot & \cdot & 3\\
4 & 3 & 4 & 1 & \cdot & 1 & \cdot
\end{array}
\end{array}\\
\noalign{\vskip 0.6em}
\multicolumn{3}{c}{%
\begin{array}{c@{\hspace{4em}}c}
\begin{array}{c}
x=3\\[-1pt]
\begin{array}{c|cccccc}
L\backslash R & -&0&1&2&3&4\\\hline
- & \cdot & 3 & 3 & 2 & \cdot & 1\\
0 & 4 & \cdot & 1 & 3 & \cdot & 1\\
1 & 3 & 2 & \cdot & 4 & \cdot & 2\\
2 & 1 & 1 & 2 & \cdot & \cdot & 4\\
3 & \cdot & \cdot & \cdot & \cdot & \cdot & \cdot\\
4 & 2 & 2 & 1 & 4 & \cdot & \cdot
\end{array}
\end{array} &
\begin{array}{c}
x=4\\[-1pt]
\begin{array}{c|cccccc}
L\backslash R & -&0&1&2&3&4\\\hline
- & \cdot & 4 & 3 & 3 & 1 & \cdot\\
0 & 3 & \cdot & 4 & 3 & 1 & \cdot\\
1 & 1 & 2 & \cdot & 3 & 1 & \cdot\\
2 & 3 & 2 & 4 & \cdot & 2 & \cdot\\
3 & 4 & 2 & 2 & 1 & \cdot & \cdot\\
4 & \cdot & \cdot & \cdot & \cdot & \cdot & \cdot
\end{array}
\end{array}
\end{array}}
\end{array}
\]
}

\section{A three-symbol protocol for \texorpdfstring{$5R3L$}{5R3L}}
\label{app:5R3L}

Again number the sages $0,1,2,3,4$, and number the seats $1,2,3,4,5$
from left to right. For a sage $x$ in position $i$, write
\[
  F_i(x;L,R)\in\{1,2,3\},
\]
where $L$ or $R$ is absent at an end. A dot denotes an impossible local
view. The following tables give a successful protocol. The two end
positions are listed first, followed by the three interior positions.

\paragraph{End positions.}

{\small
\setlength{\arraycolsep}{3.5pt}
\[
\begin{array}{c@{\hspace{4em}}c}
\text{Position 1} & \text{Position 5}\\[2pt]
\begin{array}{c}
F_1(x,R)\\[-1pt]
\begin{array}{c|ccccc}
x\backslash R & 0&1&2&3&4\\\hline
0 & \cdot & 1 & 1 & 1 & 3\\
1 & 2 & \cdot & 1 & 1 & 3\\
2 & 3 & 3 & \cdot & 2 & 2\\
3 & 3 & 3 & 1 & \cdot & 2\\
4 & 1 & 1 & 1 & 1 & \cdot
\end{array}
\end{array} &
\begin{array}{c}
F_5(x,L)\\[-1pt]
\begin{array}{c|ccccc}
x\backslash L & 0&1&2&3&4\\\hline
0 & \cdot & 1 & 2 & 3 & 1\\
1 & 1 & \cdot & 3 & 2 & 1\\
2 & 3 & 2 & \cdot & 1 & 1\\
3 & 2 & 3 & 1 & \cdot & 2\\
4 & 1 & 3 & 2 & 1 & \cdot
\end{array}
\end{array}
\end{array}
\]
}

\paragraph{Position 2.}

{\small
\setlength{\arraycolsep}{2.5pt}
\setlength{\abovedisplayskip}{\abovedisplayshortskip}
\setlength{\belowdisplayskip}{4pt}
\setlength{\belowdisplayshortskip}{4pt}
\[
\begin{array}{c@{\hspace{4em}}c@{\hspace{4em}}c}
\begin{array}{c}
F_{2}(0;L,R)\\[-1pt]
\begin{array}{c|ccccc}
L\backslash R & 0&1&2&3&4\\\hline
0 & \cdot & \cdot & \cdot & \cdot & \cdot\\
1 & \cdot & \cdot & 1 & 3 & 3\\
2 & \cdot & 1 & \cdot & 3 & 1\\
3 & \cdot & 3 & 3 & \cdot & 2\\
4 & \cdot & 3 & 2 & 1 & \cdot
\end{array}
\end{array} &
\begin{array}{c}
F_{2}(1;L,R)\\[-1pt]
\begin{array}{c|ccccc}
L\backslash R & 0&1&2&3&4\\\hline
0 & \cdot & \cdot & 2 & 1 & 2\\
1 & \cdot & \cdot & \cdot & \cdot & \cdot\\
2 & 2 & \cdot & \cdot & 1 & 3\\
3 & 2 & \cdot & 1 & \cdot & 2\\
4 & 1 & \cdot & 1 & 2 & \cdot
\end{array}
\end{array} &
\begin{array}{c}
F_{2}(2;L,R)\\[-1pt]
\begin{array}{c|ccccc}
L\backslash R & 0&1&2&3&4\\\hline
0 & \cdot & 3 & \cdot & 1 & 2\\
1 & 1 & \cdot & \cdot & 2 & 2\\
2 & \cdot & \cdot & \cdot & \cdot & \cdot\\
3 & 3 & 2 & \cdot & \cdot & 1\\
4 & 1 & 1 & \cdot & 3 & \cdot
\end{array}
\end{array}\\
\noalign{\vskip 0.6em}
\multicolumn{3}{c}{%
\begin{array}{c@{\hspace{4em}}c}
\begin{array}{c}
F_{2}(3;L,R)\\[-1pt]
\begin{array}{c|ccccc}
L\backslash R & 0&1&2&3&4\\\hline
0 & \cdot & 1 & 2 & \cdot & 1\\
1 & 3 & \cdot & 1 & \cdot & 3\\
2 & 3 & 2 & \cdot & \cdot & 2\\
3 & \cdot & \cdot & \cdot & \cdot & \cdot\\
4 & 1 & 3 & 1 & \cdot & \cdot
\end{array}
\end{array} &
\begin{array}{c}
F_{2}(4;L,R)\\[-1pt]
\begin{array}{c|ccccc}
L\backslash R & 0&1&2&3&4\\\hline
0 & \cdot & 1 & 3 & 1 & \cdot\\
1 & 2 & \cdot & 1 & 1 & \cdot\\
2 & 2 & 2 & \cdot & 1 & \cdot\\
3 & 2 & 3 & 2 & \cdot & \cdot\\
4 & \cdot & \cdot & \cdot & \cdot & \cdot
\end{array}
\end{array}
\end{array}}
\end{array}
\]
}

\paragraph{Position 3.}

{\small
\setlength{\arraycolsep}{2.5pt}
\setlength{\abovedisplayskip}{\abovedisplayshortskip}
\setlength{\belowdisplayskip}{4pt}
\setlength{\belowdisplayshortskip}{4pt}
\[
\begin{array}{c@{\hspace{4em}}c@{\hspace{4em}}c}
\begin{array}{c}
F_{3}(0;L,R)\\[-1pt]
\begin{array}{c|ccccc}
L\backslash R & 0&1&2&3&4\\\hline
0 & \cdot & \cdot & \cdot & \cdot & \cdot\\
1 & \cdot & \cdot & 1 & 2 & 2\\
2 & \cdot & 1 & \cdot & 3 & 2\\
3 & \cdot & 2 & 3 & \cdot & 3\\
4 & \cdot & 1 & 3 & 3 & \cdot
\end{array}
\end{array} &
\begin{array}{c}
F_{3}(1;L,R)\\[-1pt]
\begin{array}{c|ccccc}
L\backslash R & 0&1&2&3&4\\\hline
0 & \cdot & \cdot & 2 & 3 & 3\\
1 & \cdot & \cdot & \cdot & \cdot & \cdot\\
2 & 1 & \cdot & \cdot & 1 & 2\\
3 & 2 & \cdot & 2 & \cdot & 3\\
4 & 3 & \cdot & 3 & 1 & \cdot
\end{array}
\end{array} &
\begin{array}{c}
F_{3}(2;L,R)\\[-1pt]
\begin{array}{c|ccccc}
L\backslash R & 0&1&2&3&4\\\hline
0 & \cdot & 3 & \cdot & 2 & 1\\
1 & 1 & \cdot & \cdot & 1 & 2\\
2 & \cdot & \cdot & \cdot & \cdot & \cdot\\
3 & 2 & 2 & \cdot & \cdot & 1\\
4 & 2 & 2 & \cdot & 3 & \cdot
\end{array}
\end{array}\\
\noalign{\vskip 0.6em}
\multicolumn{3}{c}{%
\begin{array}{c@{\hspace{4em}}c}
\begin{array}{c}
F_{3}(3;L,R)\\[-1pt]
\begin{array}{c|ccccc}
L\backslash R & 0&1&2&3&4\\\hline
0 & \cdot & 2 & 1 & \cdot & 3\\
1 & 1 & \cdot & 2 & \cdot & 3\\
2 & 2 & 3 & \cdot & \cdot & 2\\
3 & \cdot & \cdot & \cdot & \cdot & \cdot\\
4 & 1 & 1 & 3 & \cdot & \cdot
\end{array}
\end{array} &
\begin{array}{c}
F_{3}(4;L,R)\\[-1pt]
\begin{array}{c|ccccc}
L\backslash R & 0&1&2&3&4\\\hline
0 & \cdot & 3 & 3 & 2 & \cdot\\
1 & 3 & \cdot & 3 & 2 & \cdot\\
2 & 3 & 3 & \cdot & 2 & \cdot\\
3 & 2 & 1 & 3 & \cdot & \cdot\\
4 & \cdot & \cdot & \cdot & \cdot & \cdot
\end{array}
\end{array}
\end{array}}
\end{array}
\]
}

\paragraph{Position 4.}

{\small
\setlength{\arraycolsep}{2.5pt}
\setlength{\abovedisplayskip}{\abovedisplayshortskip}
\setlength{\belowdisplayskip}{4pt}
\setlength{\belowdisplayshortskip}{4pt}
\[
\begin{array}{c@{\hspace{4em}}c@{\hspace{4em}}c}
\begin{array}{c}
F_{4}(0;L,R)\\[-1pt]
\begin{array}{c|ccccc}
L\backslash R & 0&1&2&3&4\\\hline
0 & \cdot & \cdot & \cdot & \cdot & \cdot\\
1 & \cdot & \cdot & 2 & 2 & 1\\
2 & \cdot & 1 & \cdot & 1 & 3\\
3 & \cdot & 2 & 1 & \cdot & 1\\
4 & \cdot & 3 & 1 & 2 & \cdot
\end{array}
\end{array} &
\begin{array}{c}
F_{4}(1;L,R)\\[-1pt]
\begin{array}{c|ccccc}
L\backslash R & 0&1&2&3&4\\\hline
0 & \cdot & \cdot & 2 & 2 & 2\\
1 & \cdot & \cdot & \cdot & \cdot & \cdot\\
2 & 3 & \cdot & \cdot & 2 & 3\\
3 & 1 & \cdot & 3 & \cdot & 3\\
4 & 1 & \cdot & 3 & 3 & \cdot
\end{array}
\end{array} &
\begin{array}{c}
F_{4}(2;L,R)\\[-1pt]
\begin{array}{c|ccccc}
L\backslash R & 0&1&2&3&4\\\hline
0 & \cdot & 2 & \cdot & 3 & 2\\
1 & 1 & \cdot & \cdot & 3 & 2\\
2 & \cdot & \cdot & \cdot & \cdot & \cdot\\
3 & 1 & 2 & \cdot & \cdot & 1\\
4 & 3 & 3 & \cdot & 2 & \cdot
\end{array}
\end{array}\\
\noalign{\vskip 0.6em}
\multicolumn{3}{c}{%
\begin{array}{c@{\hspace{4em}}c}
\begin{array}{c}
F_{4}(3;L,R)\\[-1pt]
\begin{array}{c|ccccc}
L\backslash R & 0&1&2&3&4\\\hline
0 & \cdot & 3 & 2 & \cdot & 3\\
1 & 1 & \cdot & 3 & \cdot & 2\\
2 & 3 & 3 & \cdot & \cdot & 1\\
3 & \cdot & \cdot & \cdot & \cdot & \cdot\\
4 & 3 & 1 & 2 & \cdot & \cdot
\end{array}
\end{array} &
\begin{array}{c}
F_{4}(4;L,R)\\[-1pt]
\begin{array}{c|ccccc}
L\backslash R & 0&1&2&3&4\\\hline
0 & \cdot & 1 & 3 & 3 & \cdot\\
1 & 1 & \cdot & 3 & 2 & \cdot\\
2 & 1 & 1 & \cdot & 2 & \cdot\\
3 & 2 & 2 & 1 & \cdot & \cdot\\
4 & \cdot & \cdot & \cdot & \cdot & \cdot
\end{array}
\end{array}
\end{array}}
\end{array}
\]
}

\end{document}